\documentclass[11pt]{amsart}

\usepackage[T1]{fontenc}
\usepackage[utf8]{inputenc}
\usepackage{lmodern}
\usepackage{amsmath,amssymb,amsthm,mathtools}
\usepackage{enumitem}
\usepackage[margin=1.15in]{geometry}
\usepackage[colorlinks=true,linkcolor=blue,citecolor=blue,urlcolor=blue]{hyperref}

\newtheorem{theorem}{Theorem}[section]
\newtheorem{proposition}[theorem]{Proposition}
\newtheorem{corollary}[theorem]{Corollary}
\newtheorem{lemma}[theorem]{Lemma}
\newtheorem{question}[theorem]{Question}
\theoremstyle{definition}
\newtheorem{definition}[theorem]{Definition}
\newtheorem{remark}[theorem]{Remark}

\newcommand{\codim}{\operatorname{codim}}
\newcommand{\normal}{\mathrel{\trianglelefteq}}

\title{Just-Infinite Loops and Loop Algebras}
\author[T. F. V. Paiva]{Thales Fernando Vilamaior Paiva}
\address{Universidade Federal de Mato Grosso do Sul, C\^ampus de Aquidauana \\ CEP 79200-000, Aquidauana - MS, Brazil }
\email{thales.paiva@ufms.br}

\date{}

\begin{document}

\begin{abstract}
Let $F$ be a field and let $L$ be a loop.  We call $L$ just-infinite if it is infinite and every nontrivial normal subloop has finite index, and we call the possibly nonassociative loop algebra $F[L]$ just-infinite if it is infinite-dimensional and every nonzero two-sided ideal has finite codimension. We first prove that just-infiniteness of $F[L]$ always implies just-infiniteness of $L$.  Next, using the Chein construction, we show for every infinite group $G$ that $M(G,2)$ is just-infinite if and only if $G$ is just-infinite, and that $F[M(G,2)]$ is just-infinite if and only if $F[G]$ is just-infinite.  We extend the algebraic equivalence to the generalized Moufang doubles $M(G,*,g_0)$ whenever $G$ is infinite and nonabelian. Finally, we construct a single locally finite, residually finite, nonassociative Moufang loop $L$ for which $F[L]$ is residually finite-dimensional, locally finite-dimensional, and just-infinite over every field, and we explain why infinite nonassociative RA loops cannot be just-infinite.
\end{abstract}

\subjclass[2020]{Primary 20N05; Secondary 16S34, 16N60, 17A01, 20E26, 20F50.}

\keywords{Just-infinite loop; just-infinite algebra; loop algebra; Moufang loop; Chein double; RA loop.}

\maketitle

\section{Introduction}

An infinite group is called \emph{just-infinite} if each of its nontrivial normal subgroups has finite index.  The analogous condition for algebras asks that each nonzero ideal have finite codimension.  This algebraic notion, also called nearly finite-dimensional, has been studied in the associative case by Farkas and Small \cite{FarkasSmall}, Farina and Pendergrass--Rice
\cite{FarinaRice}, and Bell, Farina, and Pendergrass--Rice \cite{BellFarinaRice}, and in the alternative case by Panasenko
\cite{Panasenko}.  The purpose of this paper is to compare the just-infinite property of a loop $L$ with that of its possibly nonassociative loop algebra $F[L]$.

The first implication is simple but useful.  If $N$ is a normal subloop of $L$, the linear extension of the quotient map $q_N:L\to L/N$ is a surjective algebra homomorphism $F[q_N]:F[L]\to F[L/N]$ and therefore induces an isomorphism
\[
  F[L]/\ker F[q_N]\cong F[L/N].
\]
Consequently, just-infiniteness of $F[L]$ implies just-infiniteness of $L$, as stated in Theorem~\ref{teoalgebra-implies-loop}.  The converse fails already for groups, as explained in Remark~\ref{remconverse}.

For inverse-property loops, we make the obstruction to this converse precise in Theorem~\ref{teofaithful-criterion}: the algebra $F[L]$ is just-infinite if and only if $L$ is just-infinite and every nonzero ideal that is faithful on
the basis loop $L$ has finite codimension.  Thus the criterion isolates exactly the ideals that are not detected by the normal-subloop lattice.

The main part of the paper concerns Chein doubles.  We first prove in Theorem~\ref{teoloop-chein} that, for every infinite group $G$, the Moufang loop $M(G,2)$ is just-infinite if and only if $G$ is just-infinite.  At the level of loop algebras, writing $A=F[G]$ and $B=F[M(G,2)]=A\oplus Au$, we describe the doubled ideals $K\oplus Ku$ and prove that every nonzero ideal of $B$ intersects $A$ nontrivially whenever $A$ is prime.  Together with Connell's primeness criterion, this gives the
central equivalence in Theorem~\ref{teochein-main}, $F[M(G,2)]$ is just-infinite if, and only if, $F[G]$ is just-infinite, as well as its stable version under arbitrary field extensions in Corollary~\ref{corstable-standard}.  To the best of our knowledge, the faithful-ideal criterion and the equivalences for Chein doubles established
here have not previously appeared in the literature. 

We then establish the analogous algebraic results for the generalized Moufang doubles $M(G,*,g_0)$ associated with admissible triples, assuming that $G$ is infinite and nonabelian.  Finally, using the groups constructed
by Belyaev, Grigorchuk, and Shumyatsky~\cite{BGS}, we obtain in Theorem~\ref{teouniform-example} a single locally finite, residually finite, nonassociative just-infinite Moufang loop whose loop algebra is locally finite-dimensional, residually finite-dimensional, and just-infinite over every field.  In contrast, Corollary~\ref{corra-obstruction} shows that the
structure of nonassociative RA loops provides an obstruction: no infinite nonassociative RA loop, and hence none of its loop algebras, can be just-infinite.

We conclude by formulating several questions concerning faithful ideals, generalized doubles at the loop level, and finitely generated examples.  These questions arise naturally from the results developed in this paper and remain unanswered here.

\section{Normal subloops and quotient loop algebras}

Throughout, $F$ is a field and the loop algebra $F[L]$ is the vector space with basis $L$, with the multiplication of $L$ extended $F$-bilinearly.  No associativity or alternativity assumption is imposed.  An \emph{ideal} of a possibly nonassociative algebra $A$ means a vector subspace $I$ such that $AI+IA\subseteq I$.

We use the kernel definition of normality.  More precisely, a subloop $N$ of $L$ is normal, written $N\normal L$, if it is the kernel of a loop homomorphism; equivalently, the quotient loop $L/N$ is defined.  Standard background on loops, congruences, and inverse-property loops can be found in \cite{Pflugfelder}.

\begin{definition}\label{def:ji}
An infinite loop $L$ is \emph{just-infinite} if every nontrivial normal subloop of $L$ has finite index.  An $F$-algebra $A$ is \emph{just-infinite} if $\dim_F A=\infty$ and every nonzero two-sided ideal of $A$ has finite codimension.
\end{definition}

Let $N\normal L$ and let $q_N:L\to L/N$ be the quotient map.  We define the \emph{relative augmentation ideal} to be the kernel of its linear extension:
\[
  \omega_F(N):=\ker F[q_N].
\]

\begin{proposition}\label{propquotient}
For every loop $L$ and every normal subloop $N\normal L$, we have
\[
  \omega_F(N)
  =\operatorname{span}_F\{x-y:q_N(x)=q_N(y)\}
  =\langle n-1:n\in N\rangle_{F[L]}.
\]
Moreover, there is a natural isomorphism of $F$-algebras
\[
  F[L]/\omega_F(N)\cong F[L/N],
\]
and consequently
\[
  \codim_{F[L]}\omega_F(N)=|L/N|,
\]
where either side is allowed to be infinite.
\end{proposition}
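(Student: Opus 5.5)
The plan is to prove the three descriptions of $\omega_F(N)$ are equal by a chain of inclusions, and then read off the isomorphism and the codimension formula from the first isomorphism theorem for the surjection $F[q_N]$. Write $\pi=F[q_N]:F[L]\to F[L/N]$. Since $q_N$ is a loop homomorphism, $\pi$ is multiplicative on the basis $L$, and hence, by bilinearity, an $F$-algebra homomorphism. It is surjective because its image contains the basis $L/N$. Consequently $\omega_F(N)=\ker\pi$ is a two-sided ideal, and the isomorphism $F[L]/\omega_F(N)\cong F[L/N]$ follows at once. Taking dimensions gives $\codim\omega_F(N)=\dim F[L/N]=|L/N|$, with infinite values allowed.

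For the first equality, set $S=\operatorname{span}_F\{x-y:q_N(x)=q_N(y)\}$. Clearly $S\subseteq\ker\pi$. For the reverse inclusion, take $\alpha=\sum_x a_x x\in\ker\pi$ and group its support according to cosets, i.e.\ according to the fibres of $q_N$. Vanishing of $\pi(\alpha)$ means that for each fibre $C$ we have $\sum_{x\in C}a_x=0$. Fix a representative $x_C$ of $C$; then
\[
\sum_{x\in C}a_x x=\sum_{x\in C}a_x(x-x_C)\in S .
\]
Summing over the finitely many fibres involved gives $\alpha\in S$.

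For the second equality, let $J=\langle n-1:n\in N\rangle_{F[L]}$ be the ideal generated. Each $n-1$ lies in $\ker\pi$ because $q_N(n)=q_N(1)$, and $\ker\pi$ is an ideal, so $J\subseteq\ker\pi=S$. Conversely, it suffices to show $x-y\in J$ whenever $q_N(x)=q_N(y)$.

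This is the main obstacle, since $L$ is not associative and one cannot simply write $y=xn$ and expand. The plan is to use that the relation $x\equiv y \iff x-y\in J$ is an equivalence relation compatible with multiplication, precisely because $J$ is a two-sided ideal. From $(x-y)z\in J$ and $z(x-y)\in J$ we get $xz\equiv yz$ and $zx\equiv zy$. Moreover, $\equiv$ is compatible with left and right division on $L$. To see this, suppose $x\equiv y$. Put $u=x\backslash z$ and $v=y\backslash z$, so that $xu=yv=z$. Then
\[
x(u-v)=z-xv\equiv z-yv=0 .
\]
One still needs to cancel $x$. I would handle this differently: restrict $\equiv$ to $L$, obtaining a relation $\theta$ on $L$ that is compatible with multiplication. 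To get a loop congruence one also needs compatibility with the divisions. One way to obtain it is to show that $\theta$ coincides with the kernel congruence of the loop homomorphism $L\to U(F[L]/J)$, or more simply to note that the classes $\{x: x\equiv 1\}$ form a normal subloop $M$ containing $N$.

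A cleaner route avoids these difficulties entirely. Consider the composite $L\to F[L]/J$. Its image is a set closed under multiplication, and $\ker\pi/J$ is the kernel of the induced surjection $F[L]/J\to F[L/N]$. Define $M=\{x\in L: x-1\in J\}\supseteq N$. If $M$ is shown to be a normal subloop, then $F[L]\to F[L/M]$ factors through $F[L]/J$, so $\ker F[q_M]\supseteq\ker\pi$. But $F[q_M]$ factors through $\pi$ since $M\supseteq N$, which forces $M=N$ and gives $S\subseteq J$. Verifying normality of $M$ via the congruence $\theta$ defined by $x-y\in J$ is where the real work lies. For this I would use that in a loop, a multiplicatively compatible equivalence relation is automatically a congruence provided its classes have the translation property $x\theta y\Rightarrow (x/z)\theta(y/z)$. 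I would try to derive this property from the linear identity $(x/z-y/z)z=x-y$, combined with the observation that right multiplication by the basis element $z$ permutes the basis $L$ and therefore defines an invertible linear map that preserves $J$.
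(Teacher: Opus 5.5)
You handle $\pi=F[q_N]$, the isomorphism $F[L]/\omega_F(N)\cong F[L/N]$, the codimension formula and the first equality correctly. Your fibre-sum argument ($\sum_{x\in C}a_x=0$ on each fibre $C$) even spells out a step the paper only asserts.

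The gap is the inclusion $S\subseteq J$ in the second equality, which the proposal never establishes. It ends with ``I would try to derive\dots'', and the sketched derivation fails:
\begin{itemize}
\item From $R_z(J)\subseteq J$ and invertibility of $R_z$ on $F[L]$ you cannot conclude $R_z^{-1}(J)\subseteq J$. An injective operator can map an infinite-dimensional subspace properly into itself.
\item The same issue blocks cancelling $x$ in $x(u-v)\in J$.
\item In a general loop, the map $x\mapsto x/z$ is not a right multiplication, so nothing forces an ideal to be stable under it. This is why the paper's Lemma~\ref{lem:ideal-congruence}, which shows that $x-y\in I$ is a congruence for an arbitrary ideal $I$, assumes the inverse property, where $R_z^{-1}=R_{z^{-1}}$.
\end{itemize}
Your sentence ``forces $M=N$ and gives $S\subseteq J$'' inherits the problem. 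The inclusion $M\subseteq N$ is immediate from $J\subseteq\ker\pi$. But passing from the identity class $M$ to the whole relation needs exactly the congruence property you have not proved.

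The obstacle you identified is not real. If $q_N(x)=q_N(y)$, put $n=x\backslash y$. Since $q_N$ is a loop homomorphism, $q_N(n)=q_N(x)\backslash q_N(y)=1$, so $n\in N$. Then
\[
  y-x=xn-x\cdot 1=x(n-1)\in J
\]
by bilinearity of the product alone. Writing $y=xn$ and expanding uses distributivity, not associativity, so it is valid in any loop algebra; this is the paper's proof. In your own terms: $\theta$ is compatible with multiplication, which you verified, and $n\mathrel{\theta}1$. Together these give $y=xn\mathrel{\theta}x\cdot 1=x$, so compatibility with the divisions is never needed.
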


\begin{proof}
The linear extension $F[q_N]$ is a surjective algebra homomorphism, and its kernel is spanned by the differences of basis elements with the same image. Let $J$ be the ideal generated by $\{n-1:n\in N\}$ and see that $J$ is contained in $\ker F[q_N]$.  

Conversely, if $q_N(x)=q_N(y)$, then $n=x\backslash y$ belongs to $N$ and $y=xn$, and then $y-x=x(n-1)\in J.$
Thus $J=\ker F[q_N]=\omega_F(N)$, and the isomorphism follows from the homomorphism theorem.  Since $L/N$ is a basis of $F[L/N]$, the last assertion follows as well.
\end{proof}

\begin{theorem}\label{teoalgebra-implies-loop}
Let $L$ be an infinite loop.  If $F[L]$ is just-infinite, then $L$ is just-infinite.
\end{theorem}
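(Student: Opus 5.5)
The plan is to apply Proposition~\ref{propquotient} directly. Let $L$ be infinite with $F[L]$ just-infinite, and let $N\normal L$ be a nontrivial normal subloop. We must show that $|L/N|<\infty$. Since $L$ is infinite by hypothesis, only this index condition needs to be checked.

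The key step is to show that the relative augmentation ideal $\omega_F(N)$ is nonzero. Because $N$ is nontrivial, we can pick $n\in N$ with $n\neq 1$. Then $n$ and $1$ are distinct elements of the basis $L$ of $F[L]$, so $n-1\neq 0$. By Proposition~\ref{propquotient}, $n-1\in\langle n-1:n\in N\rangle_{F[L]}=\omega_F(N)$. Hence $\omega_F(N)$ is a nonzero two-sided ideal of $F[L]$; it is two-sided because it is the kernel of the algebra homomorphism $F[q_N]$.

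Just-infiniteness of $F[L]$ now gives $\codim_{F[L]}\omega_F(N)<\infty$. The last assertion of Proposition~\ref{propquotient} says $\codim_{F[L]}\omega_F(N)=|L/N|$, with both sides allowed to be infinite, so $|L/N|<\infty$. Here $|L/N|$ is the index of $N$ in $L$, so every nontrivial normal subloop has finite index and $L$ is just-infinite.

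There is no serious obstacle. The only point to get right is that the codimension identity in Proposition~\ref{propquotient} covers the infinite case as well. That holds because $L/N$ is a basis of $F[L/N]\cong F[L]/\omega_F(N)$, so finiteness of the codimension forces finiteness of $L/N$.
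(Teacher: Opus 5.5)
Your proposal is correct and follows essentially the same route as the paper's proof. You pick $1\ne n\in N$ to get the nonzero element $n-1\in\omega_F(N)$, then use just-infiniteness to get finite codimension, and conclude via Proposition~\ref{propquotient} that $|L/N|=\dim_F F[L/N]<\infty$.
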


\begin{proof}
Let $1\ne N\normal L$ and choose $1\ne n\in N$.  Then $n-1$ is a nonzero element of $\omega_F(N)$, since distinct elements of $L$ are linearly independent in $F[L]$.  Just-infiniteness of $F[L]$ therefore implies that $\omega_F(N)$ has finite codimension.  Proposition~\ref{propquotient} gives
\[
  |L/N|=\dim_F F[L/N]<\infty.
\]
Thus every nontrivial normal subloop of $L$ has finite index.
\end{proof}

\begin{remark}\label{remconverse}
The converse of Theorem~\ref{teoalgebra-implies-loop} is false.  For example, the first Grigorchuk group is a just-infinite branch group, whereas its complex group algebra is not even $*$-just-infinite \cite[Theorem~7.10]{GMR}.  Since $*$-just-infiniteness tests only $*$-invariant ideals, it is weaker than just-infiniteness in the purely algebraic sense.  Hence the complex group algebra in this example is not just-infinite.
\end{remark}

\section{Faithful ideals and a criterion for just-infinite loop algebras}

We say that a loop $L$ is an \emph{inverse-property loop}, or an IP-loop, if it possesses a map $x\mapsto x^{-1}$ satisfying the identities
\[\begin{array}{rcl}
  x^{-1}(xy)&=&y\\
  (yx)x^{-1}&=&y,
  \end{array}
\]
for all $x,y\in L$.  In this case, the division operations are given by
\[\begin{array}{rcl}
  x\backslash y&=&x^{-1}y\\
  y/x&=&yx^{-1}.
  \end{array}
\]
Every Moufang loop is an inverse-property loop (\cite[IV. 1.4 Theorem]{Pflugfelder}).

Let $L$ be an inverse-property loop and let $I\normal F[L]$.  Define an equivalence relation on $L$ by
\begin{equation}\label{eqcong}
  x\equiv_I y
  \Leftrightarrow
  x-y\in I.
\end{equation}

\begin{lemma}\label{lem:ideal-congruence}
The relation $\equiv_I$ in (\ref{eqcong}) is a loop congruence.  Its identity class
\[
  N_I:=\{x\in L:x-1\in I\}
\]
is a normal subloop of $L$, and $\omega_F(N_I)\subseteq I$.
\end{lemma}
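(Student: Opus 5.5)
First I check that $\equiv_I$ is an equivalence relation and that it is compatible with multiplication and with both divisions. Reflexivity, symmetry and transitivity are immediate, because $I$ is a subspace: $x-x=0$, $y-x=-(x-y)$, and $x-z=(x-y)+(y-z)$. For multiplication, suppose $x\equiv_I x'$ and $y\equiv_I y'$. Then
\[
  xy-x'y'=(x-x')y+x'(y-y').
\]
Both terms lie in $I$ because $F[L]I+IF[L]\subseteq I$. So the products are again congruent. The divisions need the inverse property, since a loop congruence must respect $\backslash$ and $/$ as well. Using $x\backslash y=x^{-1}y$ and $y/x=yx^{-1}$, it suffices to show that $x\equiv_I x'$ implies $x^{-1}\equiv_I x'^{-1}$. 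I plan to write
\[
  x^{-1}-x'^{-1}=x^{-1}\bigl(x'x'^{-1}\bigr)-\bigl(x^{-1}x\bigr)x'^{-1}.
\]
This is not yet in the right shape, because the nonassociativity blocks a naive factorisation.

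The inverse step is the main obstacle, and I handle it with the IP identities. Put $z=x'^{-1}$. The left inverse property gives $x^{-1}(xz)=z$ and $x^{-1}(x'z)=x^{-1}$, since $x'z=1$. Hence
\[
  x^{-1}-x'^{-1}=x^{-1}(x'z)-x^{-1}(xz)=x^{-1}\bigl((x'-x)z\bigr).
\]
This lies in $I$, because $(x'-x)z\in I$ and $I$ is a two-sided ideal. Compatibility with both divisions then follows from the multiplication case: $x\backslash y=x^{-1}y$ is a product of congruent pairs, and likewise for $y/x$. So $\equiv_I$ is a loop congruence.

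Next I identify the normal subloop. The quotient $L/{\equiv_I}$ is a loop, and the projection $L\to L/{\equiv_I}$ is a loop homomorphism. Its kernel is the class of $1$, which is exactly $N_I$. By the kernel definition of normality used in the paper, $N_I\normal L$. Moreover, the cosets of $N_I$ are the $\equiv_I$-classes: in a loop, a congruence is determined by its identity class, since $x\equiv y$ if and only if $x\backslash y\equiv 1$ by compatibility with $\backslash$. Hence $q_{N_I}(x)=q_{N_I}(y)$ holds exactly when $x\equiv_I y$.

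Finally, I use Proposition~\ref{propquotient}, which gives $\omega_F(N_I)=\operatorname{span}_F\{x-y:q_{N_I}(x)=q_{N_I}(y)\}$. Each spanning vector $x-y$ satisfies $x\equiv_I y$, so $x-y\in I$. Therefore $\omega_F(N_I)\subseteq I$. Alternatively, $\omega_F(N_I)$ is the ideal generated by the elements $n-1$ with $n\in N_I$, and each such element lies in $I$ by the definition of $N_I$; this gives the same inclusion.
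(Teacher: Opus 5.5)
Your proof is correct and follows the paper's argument essentially verbatim: the same decomposition $xy-x'y'=(x-x')y+x'(y-y')$, the same inverse identity $x^{-1}-x'^{-1}=x^{-1}\bigl((x'-x)x'^{-1}\bigr)$ from the left inverse property, and the same observation that the spanning differences of $\omega_F(N_I)$ lie in $I$. You additionally spell out the equivalence-relation axioms and why the $\equiv_I$-classes coincide with the cosets of $N_I$, which the paper leaves implicit; the abandoned first attempt at the inverse step can simply be deleted.
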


\begin{proof}
Let $x,y,x',y'\in L$.  If $x-x'\in I$ and $y-y'\in I$, then, since $I$ is an ideal, $(x-x')y\in I$ and $x'(y-y')\in I$.  Hence
\[
  xy-x'y'=(x-x')y+x'(y-y')\in I,
\]
which proves compatibility with multiplication.  The relation is also compatible with inversion.  Indeed, the inverse-property identities give
\[
  x^{-1}\bigl((y-x)y^{-1}\bigr)
  =x^{-1}(yy^{-1})-x^{-1}(xy^{-1})
  =x^{-1}-y^{-1}.
\]
Thus $x-y\in I$ implies $x^{-1}-y^{-1}\in I$.  Since the division operations are expressed in terms of multiplication and inversion, $\equiv_I$ is a loop congruence.  Its identity class $N_I$ is therefore normal.

Two loop elements have the same image in $L/N_I$ exactly when they are $\equiv_I$-equivalent.  Hence all the spanning differences of $\omega_F(N_I)$ belong to $I$.
\end{proof}

\begin{theorem}\label{teoadjunction}
Let $L$ be an IP-loop, let $N\normal L$, and let $I\normal F[L]$.  Then $\omega_F(N)\subseteq I$ if and only if $N\subseteq N_I$.  Furthermore, $N_{\omega_F(N)}=N$.  Consequently, $N\mapsto\omega_F(N)$ is an order embedding of the normal subloops of $L$ into the ideals of $F[L]$, and $\omega_F(N_I)$ is the largest relative augmentation ideal contained in $I$.
\end{theorem}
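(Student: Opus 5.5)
I will prove the statement as a Galois-type adjunction between normal subloops and ideals, relying on Proposition~\ref{propquotient} and Lemma~\ref{lem:ideal-congruence}.

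\emph{The equivalence.} Suppose $\omega_F(N)\subseteq I$. For $n\in N$, the element $n-1$ lies in $\omega_F(N)$ by Proposition~\ref{propquotient}, hence lies in $I$. Thus $n\in N_I$, so $N\subseteq N_I$. Conversely, suppose $N\subseteq N_I$. Each generator $n-1$ with $n\in N$ lies in $I$, and $\omega_F(N)$ is the ideal generated by these elements (Proposition~\ref{propquotient}), so $\omega_F(N)\subseteq I$. Equivalently, one can use monotonicity of $\omega_F$ in $N$ together with $\omega_F(N_I)\subseteq I$ from Lemma~\ref{lem:ideal-congruence}. Note that $N_I$ is normal by that lemma, so it makes sense to regard it as a normal subloop here.

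\emph{The identity $N_{\omega_F(N)}=N$.} Applying the equivalence with $I=\omega_F(N)$ gives $N\subseteq N_{\omega_F(N)}$. For the reverse inclusion, let $x\in L$ with $x-1\in\omega_F(N)=\ker F[q_N]$. Then $q_N(x)-q_N(1)=0$ in $F[L/N]$. Since the elements of $L/N$ are linearly independent in $F[L/N]$, we get $q_N(x)=q_N(1)$, that is, $x\in N$. This linear-independence step is the only point where something beyond formal manipulation is used. It is immediate because $L/N$ is a basis of $F[L/N]$, so I expect no real obstacle here.

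\emph{The consequences.} Monotonicity of $\omega_F$ is clear: if $N\subseteq M$, then $\omega_F(N)\subseteq\omega_F(M)$, since every generator of the former lies in the latter. For the converse, suppose $\omega_F(N)\subseteq\omega_F(M)$. The equivalence gives $N\subseteq N_{\omega_F(M)}=M$. Hence $N\mapsto\omega_F(N)$ both preserves and reflects order, so it is an order embedding and in particular injective.

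Finally, $\omega_F(N_I)\subseteq I$ by Lemma~\ref{lem:ideal-congruence}. If $\omega_F(N)\subseteq I$ for some $N\normal L$, then $N\subseteq N_I$ by the equivalence, hence $\omega_F(N)\subseteq\omega_F(N_I)$. So $\omega_F(N_I)$ is the largest relative augmentation ideal contained in $I$.
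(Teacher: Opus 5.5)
Your proposal is correct and follows essentially the same route as the paper: the adjunction comes from the generators $n-1$ of $\omega_F(N)$, the identity $N_{\omega_F(N)}=N$ comes from $\ker F[q_N]$ together with linear independence of $L/N$ in $F[L/N]$, and the order-embedding and maximality statements are deduced from these. The differences are only cosmetic: you split $N_{\omega_F(N)}=N$ into two inclusions rather than one chain of equivalences, and you cite Lemma~\ref{lem:ideal-congruence} explicitly for $\omega_F(N_I)\subseteq I$ and for normality of $N_I$, which the paper leaves implicit.
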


\begin{proof}
If $\omega_F(N)\subseteq I$, then $n-1\in I$ for every $n\in N$, and hence $N\subseteq N_I$.  Conversely, if $N\subseteq N_I$, then $n-1\in I$ for every $n\in N$.  Proposition~\ref{propquotient} shows that these elements generate
$\omega_F(N)$ as an ideal, so $\omega_F(N)\subseteq I$.

For $x\in L$, we have
\[
  x\in N_{\omega_F(N)} \Leftrightarrow
  F[q_N](x-1)=0 \Leftrightarrow q_N(x)=q_N(1).
\]
The last condition is equivalent to $x\in N$, which proves $N_{\omega_F(N)}=N$.  Moreover, for normal subloops $N,M\normal L$, the adjunction gives
\[
  \omega_F(N)\subseteq\omega_F(M)
  \Leftrightarrow
  N\subseteq N_{\omega_F(M)}
  \Leftrightarrow
  N\subseteq M.
\]
Thus $N\mapsto\omega_F(N)$ is an order embedding.  Finally, if $\omega_F(M)\subseteq I$ for a normal subloop $M$,
the adjunction gives $M\subseteq N_I$, whence $\omega_F(M)\subseteq\omega_F(N_I)$.  Thus $\omega_F(N_I)$ is the largest
relative augmentation ideal contained in $I$.
\end{proof}

\begin{definition}
We say that an ideal $I\normal F[L]$ is \emph{$L$-faithful} if $N_I=\{1\}$.  Equivalently, the composite set map
\[
  L\hookrightarrow F[L]\twoheadrightarrow F[L]/I
\]
is injective.
\end{definition}

Indeed, if two loop elements are identified modulo $I$, their left quotient belongs to the identity class $N_I$.  Thus $N_I=\{1\}$ forces the two elements to be equal; the converse is immediate.

\begin{theorem}[Faithful-ideal criterion] \label{teofaithful-criterion}
Let $L$ be an infinite IP-loop.  Then $F[L]$ is just-infinite if and only if both of the following conditions hold:
\begin{itemize}
  \item[(i)] $L$ is just-infinite;
  \item[(ii)] every nonzero $L$-faithful ideal of $F[L]$ has finite codimension.
\end{itemize}
\end{theorem}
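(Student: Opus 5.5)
The forward direction is immediate. If $F[L]$ is just-infinite, then (i) follows from Theorem~\ref{teoalgebra-implies-loop}. Condition (ii) is a special case of the hypothesis, since every nonzero ideal, faithful or not, has finite codimension.

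For the converse, assume (i) and (ii). First, $\dim_F F[L]=|L|=\infty$ because $L$ is infinite. Now let $I$ be a nonzero ideal of $F[L]$; we must show it has finite codimension. We split according to the normal subloop $N_I$ from Lemma~\ref{lem:ideal-congruence}.

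\emph{Case $N_I=\{1\}$.} Then $I$ is a nonzero $L$-faithful ideal, and (ii) gives finite codimension directly.

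\emph{Case $N_I\neq\{1\}$.} By Lemma~\ref{lem:ideal-congruence}, $N_I$ is a nontrivial normal subloop of $L$. By (i), $|L/N_I|<\infty$. Lemma~\ref{lem:ideal-congruence} also gives $\omega_F(N_I)\subseteq I$, and Proposition~\ref{propquotient} gives $\codim_{F[L]}\omega_F(N_I)=|L/N_I|<\infty$. Hence
\[
\codim_{F[L]} I\le \codim_{F[L]}\omega_F(N_I)<\infty.
\]

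There is no real obstacle here: the IP hypothesis was needed only to make $\equiv_I$ a congruence, and that was already settled in Lemma~\ref{lem:ideal-congruence}. The only point worth noting is that the case split is exhaustive, since every ideal either is $L$-faithful or has a nontrivial identity class $N_I$. This is exactly why condition (ii) captures all ideals not controlled by normal subloops.
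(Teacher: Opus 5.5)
Your proof is correct and follows the paper's own argument: the same case split on whether $N_I$ is trivial, using (ii) in the faithful case and, otherwise, the containment $\omega_F(N_I)\subseteq I$ together with $\codim_{F[L]}\omega_F(N_I)=|L/N_I|<\infty$. Your explicit check that $\dim_F F[L]=\infty$ is a small detail the paper leaves implicit.
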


\begin{proof}
If $F[L]$ is just-infinite, condition (i) follows from Theorem~\ref{teoalgebra-implies-loop}, while (ii) is immediate from the
definition.

Conversely, assume (i) and (ii), and let $0\ne I\normal F[L]$.  If $N_I=\{1\}$, then $I$ has finite codimension by (ii).  If $N_I\ne\{1\}$, then $L/N_I$ is finite by (i).  Theorem~\ref{teoadjunction} and Proposition~\ref{propquotient} give a surjection
\[
  F[L]/\omega_F(N_I)\twoheadrightarrow F[L]/I
\]
from the finite-dimensional algebra $F[L/N_I]$.  Hence $F[L]/I$ is finite-dimensional, which means that every nonzero ideal has finite codimension.
\end{proof}

\begin{remark}
Notice that Theorem~\ref{teofaithful-criterion} separates the two sources of ideals. The ideal $\omega_F(N_I)$ is the part detected by the normal-subloop lattice; the quotient $I/\omega_F(N_I)$ records additional linear relations.  An $L$-faithful ideal is exactly an ideal for which the first part vanishes.
\end{remark}

\section{Just-infinite Chein loops}

Let $G$ be a group and let $u$ be a symbol not in $G$.  On the set $M(G,2)=G\sqcup Gu$, Chein's multiplication~\cite{Chein} is determined, for $g,h\in G$, by
\begin{equation}\label{eq:chein-loop}
\begin{array}{rcl}
g\cdot h&=&gh,\\
g\cdot(hu)&=&(hg)u,\\
(gu)\cdot h&=&(gh^{-1})u,\\
(gu)\cdot(hu)&=&h^{-1}g,
\end{array}
\end{equation}
It is well known that these rules define a Moufang loop, which is a group if and only if $G$ is abelian. We next show that this construction preserves the just-infinite property of the loop in both directions.

\begin{theorem}\label{teoloop-chein}
For every infinite group $G$, the loop $M(G,2)$ is just-infinite if and only if $G$ is just-infinite.
\end{theorem}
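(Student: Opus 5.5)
The plan is to compare normal subloops of $L=M(G,2)$ with normal subgroups of $G$, using two structural facts. First, $G$ is a normal subloop of index $2$ in $L$: the map $L\to C_2$ sending $G\mapsto 0$ and $Gu\mapsto 1$ is a loop homomorphism, as one reads off from the four rules in~(\ref{eq:chein-loop}). Second, Chein's construction is functorial in $G$. Both directions rest on these.

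\emph{($\Rightarrow$)} Assume $L$ is just-infinite and let $1\ne K\normal G$. Write $\bar g=gK$. I define $\pi:M(G,2)\to M(G/K,2)$ by $g\mapsto\bar g$ and $gu\mapsto\bar g u$. Each multiplication rule in~(\ref{eq:chein-loop}) is built from products and inverses in $G$, and these are compatible with the quotient map $G\to G/K$. Checking the four rules one by one therefore shows that $\pi$ is a surjective loop homomorphism. Its kernel is $K$, so $K\normal L$ in the kernel sense used in the paper. Since $K\ne1$, just-infiniteness of $L$ gives $|L/K|<\infty$, and hence $|G/K|=|L/K|/2<\infty$. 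The group $G$ is infinite by hypothesis, so $G$ is just-infinite.

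\emph{($\Leftarrow$)} Assume $G$ is just-infinite and let $1\ne N\normal L$. The restriction of $q_N$ to $G$ is a group homomorphism with kernel $N\cap G$, so $N\cap G\normal G$. If $N\cap G\ne1$, then $N\cap G$ has finite index in $G$. Since $|L:G|=2$, it then has finite index in $L$, and so does $N\supseteq N\cap G$.

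The remaining case, $N\cap G=1$, is the main obstacle, and I must show it cannot occur. In this case $N$ embeds in $L/G\cong C_2$, so $N=\{1,gu\}$ for some $g\in G$. Normality means the cosets of $N$ partition $L$, so $hN=Nh$ for every $h\in G$. The rules give
\[
h\cdot gu=(gh)u,\qquad gu\cdot h=(gh^{-1})u,
\]
so $hN=\{h,(gh)u\}$ and $Nh=\{h,(gh^{-1})u\}$. Equating these forces $gh=gh^{-1}$, that is, $h^2=1$ for all $h\in G$. Thus $G$ would be an infinite elementary abelian $2$-group. Such a group is an infinite-dimensional $\mathbb F_2$-vector space, and it has nontrivial subgroups of infinite index, such as a hyperplane complement of a $1$-dimensional subspace. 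It is therefore not just-infinite, which is a contradiction. Hence every nontrivial normal subloop of $L$ has finite index, and since $L$ is infinite, $L$ is just-infinite.
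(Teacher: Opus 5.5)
Your proof is correct and has the same structure as the paper's. The ($\Rightarrow$) direction uses functoriality of $M(-,2)$, and ($\Leftarrow$) splits on whether $N\cap G$ is trivial. In the degenerate case you reach ``$G$ is an infinite elementary abelian $2$-group'' more directly than the paper, which passes to $M/Q$ and uses $\pi(u)\in\pi(G)$. You instead get $|N|=2$ from the index-$2$ map to $C_2$ and then use $hN=Nh$. That equality holds because both $hN$ and $Nh$ equal the fibre $q_N^{-1}(q_N(h))$, since $q_N(y)=q_N(h)$ iff $h\backslash y\in N$ iff $y/h\in N$; the fact that cosets partition $L$ does not by itself give it. One slip: a hyperplane has index $2$, so your example is wrong. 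The witness you want is a $1$-dimensional subspace, i.e.\ a subgroup of order $2$, which is normal because $G$ is abelian and has infinite index.
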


\begin{proof}
Suppose first that $M=M(G,2)$ is just-infinite.  If $N\normal G$, then the natural map
\[
  M(G,2)\rightarrow M(G/N,2)
\]
has kernel $N$.  Thus every nontrivial normal subgroup of $G$ is a nontrivial normal subloop of $M$ and consequently has finite index in $M$, hence also in $G$.  Therefore $G$ is just-infinite.

Conversely, assume that $G$ is just-infinite, and let $1\ne Q\normal M$. Putting $H=Q\cap G$, the restriction to $G$ of the quotient map $\pi:M\to M/Q$ has kernel $H$, so $H\normal G$.  If $H\ne1$, then $G/H$ is
finite.  Since
\[
  \pi(M)=\pi(G)\cup\pi(G)\pi(u),
\]
the loop $M/Q$ is finite.

It remains to rule out $H=1$.  In that case a nonidentity element of $Q$ must have the form $gu$.  Hence $\pi(u)=\pi(g)^{-1}\in\pi(G)$, so $\pi(M)=\pi(G)$ and $\pi|_G$ is injective.  The identity
\[
  a(bu)=(ba)u,\text{ for } a,b\in G
\]
then shows, after applying $\pi$, that $ab=ba$ for all $a,b\in G$.  Indeed, $\pi(u)\in\pi(G)$, so the calculation takes place in the associative subgroup $\pi(G)$, where cancellation of $\pi(u)$ is legitimate.  Thus $G$ is abelian.  The identity $uh=h^{-1}u$ and the fact that $\pi(u)\in\pi(G)$ now give $h=h^{-1}$ for every $h\in G$.  Hence $G$ is an infinite elementary abelian $2$-group.  Such a group is not just-infinite, because every subgroup of order $2$ is normal and has
infinite index.  This contradiction proves that $H\ne1$, and the preceding paragraph completes the proof.
\end{proof}

\begin{corollary}\label{corloop-examples}
If $G$ is an infinite nonabelian just-infinite group, then $M(G,2)$ is a nonassociative just-infinite Moufang loop.
\end{corollary}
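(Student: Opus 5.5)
The corollary should follow by combining Theorem~\ref{teoloop-chein} with the standard facts about the Chein construction recalled just before it. Let $G$ be an infinite nonabelian just-infinite group. Since $G$ is infinite, $M(G,2)=G\sqcup Gu$ is infinite. Applying Theorem~\ref{teoloop-chein} in the direction ``$G$ just-infinite $\Rightarrow$ $M(G,2)$ just-infinite'' then shows that $M(G,2)$ is just-infinite.

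Next we use the well-known fact, stated before Theorem~\ref{teoloop-chein}, that the rules \eqref{eq:chein-loop} always define a Moufang loop. So $M(G,2)$ is Moufang.

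It remains to show nonassociativity. The background statement says $M(G,2)$ is a group exactly when $G$ is abelian. To keep the argument self-contained, I would also exhibit an explicit failure of associativity. Choose $a,b\in G$ with $ab\ne ba$ and compare $(ab)u$ with $a(bu)$. By the second rule of \eqref{eq:chein-loop},
\[
  (ab)\cdot u=(1\cdot ab)u=(ab)u,
\]
while
\[
  a\cdot(b\cdot u)=a\cdot(bu)=(ba)u.
\]
Since $G\to Gu$, $g\mapsto gu$, is injective, these two elements differ. Hence the associative law fails for the triple $(a,b,u)$.

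None of these steps is a real obstacle. The only point needing care is checking that $u\cdot g$ and $g\cdot(hu)$ are computed with the correct rule from \eqref{eq:chein-loop}, since the displayed products rely on that reading. Everything else is a direct citation of Theorem~\ref{teoloop-chein} and Chein's theorem that $M(G,2)$ is Moufang.
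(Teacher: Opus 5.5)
Your proposal is correct and follows the paper's own reasoning, which states the corollary without a separate proof as an immediate consequence of Theorem~\ref{teoloop-chein} and the recalled fact that $M(G,2)$ is a Moufang loop that is a group exactly when $G$ is abelian. Your explicit witness $(ab)u\ne(ba)u$ for the triple $(a,b,u)$ is a valid self-contained check of nonassociativity, and it rests on the same identity $a(bu)=(ba)u$ that the paper uses in proving Theorem~\ref{teoloop-chein}.
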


\section{Ideals in the standard Chein loop algebra}

Fix a field $F$ and write $A=F[G]$ and $B=F[M(G,2)]=A\oplus Au$.  Let $*:A\to A$ be the linear extension of $g\mapsto g^{-1}$; this is an involutory anti-automorphism.  Extending the multiplication rules of $M(G,2)$ bilinearly gives, for $a,b\in A$,
\begin{equation}\label{eq:chein-algebra}
\begin{array}{rcl}
  a(bu)&=&(ba)u,\\
  (au)b&=&(ab^*)u,\\
  (au)(bu)&=&b^*a.\\
\end{array}
\end{equation}

If $K\normal A$, then
$K^\ast=\{k^\ast:k\in K\}\normal A$.

Recall that an associative algebra is prime if the product of any two nonzero ideals is nonzero.  We shall use Connell's primeness criterion \cite{Connell}, which states that, for a field $F$, the group algebra $F[G]$ is prime if and only if $G$ has no nontrivial finite normal subgroup.  In particular, if $G$ is infinite and just-infinite, then $F[G]$ is prime.

\begin{proposition}\label{propdoubled-ideals}
Let $K\normal A$.  Then $\widetilde K:=K\oplus Ku$ is an ideal of $B$ if and only if $K^*=K$.  In this case
\[
  B/\widetilde K\cong (A/K)\oplus(A/K)u
\]
in the evident algebraic sense, namely as the Chein-type algebra built from $A/K$ and its induced involutory anti-automorphism, and
\[
  \dim_F(B/\widetilde K)=2\dim_F(A/K).
\]
Moreover, if $J\normal B$ and $K=J\cap A$, then $K^*=K$ and $\widetilde K\subseteq J$.
\end{proposition}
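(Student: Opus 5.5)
The plan is to compute directly with the multiplication rules (\ref{eq:chein-algebra}), treating $B=A\oplus Au$ as a $\mathbb{Z}/2$-graded algebra. For the ideal criterion, let $a,b\in A$ and $k\in K$. First I check the eight products of a homogeneous element of $B$ with a homogeneous element of $\widetilde K$:
\[
a k,\ k a\in K,\qquad a(ku)=(ka)u,\qquad (ku)a=(ka^*)u,
\]
\[
(au)k=(ak^*)u,\qquad k(au)=(ak)u,\qquad (au)(ku)=k^*a,\qquad (ku)(au)=a^*k.
\]
All of these lie in $\widetilde K$ except possibly $(ak^*)u$ and $k^*a$. These two lie in $\widetilde K$ for all $a$ exactly when $K^*\subseteq K$; taking $a=1$ shows this is also necessary. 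Since $*$ is an involution, $K^*\subseteq K$ is equivalent to $K^*=K$. So $\widetilde K$ is an ideal if and only if $K^*=K$.

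For the quotient, assume $K^*=K$. Then $*$ descends to an involutory anti-automorphism of $A/K$, and the map $B\to (A/K)\oplus(A/K)u$ given by $a+bu\mapsto \bar a+\bar b u$ is linear and surjective with kernel $\widetilde K$. It respects the rules (\ref{eq:chein-algebra}) because those rules involve only the multiplication of $A$ and the map $*$, both of which pass to $A/K$. The dimension formula follows at once, since the quotient is a direct sum of two copies of $A/K$.

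For the last assertion, let $J\normal B$ and $K=J\cap A$.
\begin{itemize}
\item $K$ is an ideal of $A$, because $J$ is closed under multiplication by $A$.
\item $K^*\subseteq K$: for $k\in K$, the product $(uu)$ gives $u\cdot u=1$ and $(u)(k)\cdot$ yields $(ku)$-type terms; more directly, $(1u)k=(1\cdot k^*)u=k^*u\in J$. Multiplying again, $(k^*u)(1u)=1^*k^*=k^*\in J\cap A=K$.
\item $Ku\subseteq J$: for $k\in K$, $k(1u)=(1\cdot k)u=ku\in J$.
\end{itemize}
Hence $K^*=K$ and $\widetilde K=K\oplus Ku\subseteq J$.

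I expect no serious obstacle. The only delicate points are keeping the order of the factors straight in the rules (\ref{eq:chein-algebra}), and noticing that $K^*=K$ comes from multiplying by $u$ twice.
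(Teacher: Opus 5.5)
Your proof is correct and follows the paper's argument. Sufficiency comes from checking the multiplication rules, and both necessity and $K^*\subseteq K$ come from multiplying an element of $K$ by $u$ twice: you use $(1u)k=k^*u$ and then $(k^*u)(1u)=k^*$, where the paper uses $k(1u)=ku$ and then $u(ku)=k^*$. The inclusion $Ku\subseteq J$ follows from $k(1u)=ku$. Delete the garbled clause about ``$(uu)$'' and ``$(u)(k)\cdot$'' in your third bullet; the ``more directly'' computation is all that is needed.
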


\begin{proof}
If $K^*=K$, the multiplication rules in \eqref{eq:chein-algebra} show directly that $K\oplus Ku$ is stable under left and right multiplication by $A\oplus Au$.  Hence $\widetilde K\normal B$.

Conversely, suppose that $\widetilde K=K\oplus Ku$ is an ideal of $B$.  For $k\in K$, we have $ku\in Ku\subseteq\widetilde K$ and hence $u(ku)\in\widetilde K$.  Since
\[
  u(ku)=(1u)(ku)=k^*1=k^\ast,
\]
we have $k^\ast\in\widetilde K$.  Since $k^\ast\in A$,
\[
  k^\ast\in\widetilde K\cap A=(K\oplus Ku)\cap A=K.
\]
Thus $K^\ast\subseteq K$, and equality follows by applying $*$ again.  The quotient and dimension statements follow from $B=A\oplus Au$.

Now let $J\normal B$ and $K=J\cap A$.  For $k\in K$, first $ku\in J$ and then $u(ku)=k^*\in J\cap A=K$.  Thus $K^*=K$.  The same multiplication by $u$ shows $Ku\subseteq J$, and therefore $K\oplus Ku\subseteq J$.
\end{proof}

\begin{theorem}\label{teoideal_intersection}
Let $G$ be infinite and suppose that $A=F[G]$ is prime.  Then every nonzero ideal $J\normal B=F[M(G,2)]$ satisfies
\[
  J\cap A\ne0.
\]
Consequently, every nonzero ideal of $B$ contains a nonzero doubled ideal $K\oplus Ku$ with $K^*=K$.
\end{theorem}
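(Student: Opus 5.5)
The plan is to argue by contradiction: assume $J\cap A=0$ and show that $J$ forces strong commutation relations in $A$, which primeness of $A$ (via Connell's criterion) rules out. The final assertion then follows at once from the last part of Proposition~\ref{propdoubled-ideals}, applied with $K=J\cap A\ne0$.

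\emph{Step 1: $J$ is the graph of a linear map.} Since $(bu)u=(1^*b)=b$ by \eqref{eq:chein-algebra}, right multiplication by $u$ sends $Au$ into $A$. Hence $J\cap A=0$ forces $J\cap Au=0$ as well. Consequently the projection $a+bu\mapsto b$ is injective on $J$. So $J=\{\varphi(b)+bu:b\in P\}$ for a nonzero subspace $P\subseteq A$ and an injective linear map $\varphi:P\to A$.

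\emph{Step 2: extract identities for $\varphi$.} For $x=\varphi(b)+bu\in J$ and $c\in A$, the rules \eqref{eq:chein-algebra} give
\[
cx=c\varphi(b)+(bc)u,\qquad xc=\varphi(b)c+(bc^*)u .
\]
Both products lie in $J$, so $bc\in P$ with $\varphi(bc)=c\varphi(b)$, and $\varphi(bc^*)=\varphi(b)c$. Replacing $c$ by $c^*$ in the second identity and comparing with the first yields
\[
c\,\varphi(b)=\varphi(b)\,c^*\quad\text{for all }c\in A .
\]
Fix $b\in P$ with $b\ne0$, and put $a=\varphi(b)$, which is nonzero by injectivity. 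Then $ca=ac^*$ for all $c\in A$, and in particular $Aa=aA$.

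\emph{Step 3: primeness forces a contradiction.} Let $R=\{r\in A:ar=0\}$. This is a right ideal, and it is also a left ideal, because $a(cr)=(ac)r=(c^*a)r=c^*(ar)=0$. The two-sided ideal $Aa=aA$ satisfies $(aA)R=a(AR)\subseteq aR=0$. Since $A$ is prime and $a\ne0$, we get $R=0$.

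Applying the relation $ca=ac^*$ twice with group elements gives
\[
(gh)a=a\,h^{-1}g^{-1}\quad\text{and}\quad g(ha)=a\,g^{-1}h^{-1}.
\]
Hence $a\,(h^{-1}g^{-1}-g^{-1}h^{-1})=0$, and the element in parentheses lies in $R=0$. So $gh=hg$ for all $g,h\in G$, and $G$ is abelian. Then $A$ is commutative, and $ca=ac^*$ becomes $a(c-c^*)=0$. Thus $g-g^{-1}\in R=0$, so $g^2=1$ for all $g\in G$. An infinite elementary abelian $2$-group has nontrivial finite normal subgroups, so $F[G]$ is not prime by Connell's criterion. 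This contradicts the hypothesis, and therefore $J\cap A\ne0$.

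The main obstacle I expect is Step 3: converting the pointwise relation $ca=ac^*$ into an honest product of two nonzero ideals, as primeness requires. The point that makes this work is that $Aa=aA$, which makes the right annihilator of $a$ two-sided. Steps 1 and 2 are routine bookkeeping with \eqref{eq:chein-algebra}.
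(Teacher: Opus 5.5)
Your proof is correct. It follows the paper's overall plan: assume $J\cap A=0$, write $J$ as the graph of a linear map, extract twisted module identities, use primeness to force $G$ to be an infinite elementary abelian $2$-group, then invoke Connell. The middle of the argument, however, is organized differently.

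The paper parametrizes $J$ by its $A$-component, $J=\{a+T(a)u:a\in V\}$. Its identities $T(ca)=T(a)c$ and $T(ac)=T(a)c^{-1}$ therefore constrain $T(a)$ rather than $a$. It must first use right multiplication by $u$ to show $T(V)=V$ and $T^2=1$. Only then can it compare $T((ca)d)$ with $T(c(ad))$ to reach $V(cd^{-1}-d^{-1}c)=0$.

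You parametrize by the $Au$-component instead. This gives two formulas for the same value $\varphi(bc)$, and hence directly the sharper pointwise relation $ca=ac^*$ for every $a\in p_A(J)$ and $c\in A$.

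Primeness is also used differently. The paper lets the ideal $V$ annihilate the ideal generated by the elements $cd^{-1}-d^{-1}c$, and handles the abelian case separately via $c-c^{-1}$. You use $Aa=aA$ to make the right annihilator of a single element $a$ a two-sided ideal, killed by the nonzero ideal $aA$. Commutativity and exponent $2$ then both follow from $a(\cdot)=0$, with no case split.

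Your version is slightly more economical. The paper's stays entirely at the level of ideals and records the extra structure $T(V)=V$, $T^2=1$ of such a hypothetical $J$.

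A minor point in Step 1: injectivity of $a+bu\mapsto b$ on $J$ follows from $J\cap A=0$ alone. It is $J\cap Au=0$ that makes $\varphi$ injective and so guarantees $a=\varphi(b)\ne0$.
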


\begin{proof}
Assume, for a contradiction, that $0\ne J\normal B$ and $J\cap A=0$.  We also have $J\cap Au=0$, since $(au)u=(au)(1u)=a$ for $a\in A$.  Let $p_A:B\to A$ be the projection and put $V=p_A(J)$.  The restriction $p_A|_J$ is injective, so there is a unique linear map $T:V\to A$ such that
\[
  J=\{a+T(a)u:a\in V\}.
\]
Multiplication on either side by elements of $A$, followed by comparison of the two components, shows that $V\normal A$ and, for all $a\in V$ and $c\in G$, we have
\begin{equation}\label{eq:T-standard}
 \begin{array}{rcl}
 T(ca)&=&T(a)c,\\
  T(ac)&=&T(a)c^{-1}.\\
\end{array}
\end{equation}
Multiplication on the right by $u$ gives
\[
  (a+T(a)u)u=T(a)+au.
\]
Comparison with the graph description shows that $T(V)=V$ and $T^2=1$ on $V$.  In particular, $V\ne0$.

For $a\in V$ and $c,d\in G$, calculate $T(cad)$ in two ways using
\eqref{eq:T-standard}:
\[\begin{array}{rcl}
  T((ca)d)&=&T(a)cd^{-1},\\
  T(c(ad))&=&T(a)d^{-1}c.
  \end{array}
\]
Since $T(V)=V$, it follows that, for $c,d\in G$,
\[
  V(cd^{-1}-d^{-1}c)=0.
\]
If $G$ is nonabelian, let $C$ be the ideal of $A$ generated by the elements $cd^{-1}-d^{-1}c$.  Then $C\ne0$, whereas the preceding identity and the fact that $V$ is an ideal imply $VC=0$.  This contradicts the primeness of $A$.

Suppose instead that $G$ is abelian.  Since $ca=ac$, the two identities in \eqref{eq:T-standard} give
\[
  V(c-c^{-1})=0.
\]
If $c-c^{-1}$ were nonzero for some $c$, the nonzero ideal it generates would be annihilated on the left by $V$, contrary to primeness.  Hence $c=c^{-1}$ for every $c\in G$.  But then $G$ is an elementary abelian
$2$-group, and any subgroup of order $2$ is a nontrivial finite normal subgroup.  Connell's criterion contradicts the primeness of $A$ because $G$ is infinite.  Thus $J\cap A\ne0$.

For the final assertion, take $K=J\cap A$.  Proposition \ref{propdoubled-ideals} gives the nonzero doubled ideal
$K\oplus Ku\subseteq J$.
\end{proof}

\begin{theorem}\label{teochein-main}
For every infinite group $G$ and every field $F$, the algebra $F[M(G,2)]$ is just-infinite if and only if $F[G]$ is just-infinite.
\end{theorem}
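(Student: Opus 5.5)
We prove the two implications separately. Both infinite-dimensionality conditions are automatic: $G$ is infinite, so $\dim_F A=\dim_F B=\infty$.

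\emph{Suppose first that $B=F[M(G,2)]$ is just-infinite.} By Theorem~\ref{teoalgebra-implies-loop}, $M(G,2)$ is just-infinite, and by Theorem~\ref{teoloop-chein} so is $G$. Hence $G$ has no nontrivial finite normal subgroup: such a subgroup would be nontrivial of infinite index. By Connell's criterion, $A$ is prime.

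Now let $0\ne K\normal A$. The ideal $K\cap K^\ast$ is $*$-stable, but it might be zero, so we use a product instead. The product ideal $KK^\ast$ is nonzero by primeness. Since $*$ is an anti-automorphism, $(KK^\ast)^\ast=K^{\ast\ast}K^\ast=KK^\ast$, so $KK^\ast$ is $*$-stable. It also lies in $K$, because $K$ is an ideal. By Proposition~\ref{propdoubled-ideals}, $KK^\ast\oplus KK^\ast u$ is a nonzero ideal of $B$. It therefore has finite codimension, and
\[
  2\dim_F(A/KK^\ast)=\dim_F\bigl(B/(KK^\ast\oplus KK^\ast u)\bigr)<\infty .
\]
Since $KK^\ast\subseteq K$, the quotient $A/K$ is finite-dimensional. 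Thus $A$ is just-infinite.

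\emph{Conversely, suppose $A=F[G]$ is just-infinite.} We first obtain primeness of $A$. By Theorem~\ref{teoalgebra-implies-loop}, $G$ is just-infinite. So any nontrivial finite normal subgroup $N$ would have finite index, making $G$ finite, which is a contradiction. Connell's criterion then says that $A$ is prime.

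Let $0\ne J\normal B$. By Theorem~\ref{teoideal_intersection}, $K=J\cap A$ is nonzero. By Proposition~\ref{propdoubled-ideals}, $K^\ast=K$ and $K\oplus Ku\subseteq J$. Just-infiniteness of $A$ gives $\dim_F(A/K)<\infty$. The dimension formula in Proposition~\ref{propdoubled-ideals} then gives
\[
  \dim_F(B/J)\le\dim_F\bigl(B/(K\oplus Ku)\bigr)=2\dim_F(A/K)<\infty .
\]
So $B$ is just-infinite.

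The only genuinely delicate point is the forward direction. There an arbitrary ideal $K$ of $A$ need not be $*$-stable, so it cannot be doubled directly. The plan handles this by passing to $KK^\ast$, whose nonvanishing rests on the primeness of $A$, deduced from the loop-level result together with Connell's criterion. Everything else is a direct assembly of the earlier results.
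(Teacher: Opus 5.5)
Your proposal is correct and follows essentially the same route as the paper. For $F[G]\Rightarrow F[M(G,2)]$ you use primeness via Connell's criterion, then Theorem~\ref{teoideal_intersection} and the doubled ideal $K\oplus Ku$ from Proposition~\ref{propdoubled-ideals}; for the converse you pass to the nonzero $*$-stable ideal $KK^\ast\subseteq K$, exactly as the paper does, and your checks of $(KK^\ast)^\ast=KK^\ast$ and of the absence of finite normal subgroups only spell out steps the paper states briefly.
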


\begin{proof}
Assume first that $A=F[G]$ is just-infinite.  By Theorem~\ref{teoalgebra-implies-loop}, $G$ is just-infinite, and hence $A$
is prime by Connell's criterion.  Let $0\ne J\normal B$.  By Theorem~\ref{teoideal_intersection}, $K=J\cap A$ is nonzero.  Since $A$ is just-infinite, $A/K$ is finite-dimensional.  Since $K\oplus Ku\subseteq J$, Proposition~\ref{propdoubled-ideals} gives
\[
  \dim_F\bigl(B/(K\oplus Ku)\bigr)
  =2\dim_F(A/K)<\infty.
\]
Therefore $B/J$ is finite-dimensional, and $B$ is just-infinite.

Conversely, assume that $B$ is just-infinite.  Theorem \ref{teoalgebra-implies-loop} and Theorem~\ref{teoloop-chein} imply that
$G$ is just-infinite, hence $A$ is prime.  Let $0\ne K\normal A$.  Since $A$ is prime, the product $H=KK^*$ is nonzero. Furthermore, $H$ is an ideal, satisfies $H^*=H$, and is contained in $K$. Proposition~\ref{propdoubled-ideals} shows that $H\oplus Hu$ is a nonzero ideal of $B$.  It therefore has finite codimension, and the dimension formula in that proposition gives
\[
  2\dim_F(A/H)=\dim_F\bigl(B/(H\oplus Hu)\bigr)<\infty.
\]
Since $H\subseteq K$, the quotient $A/K$ is finite-dimensional, therefore $A$ is just-infinite.
\end{proof}

The following notion was first introduced and systematically investigated by Bell, Farina, and Pendergrass--Rice \cite[Definition~1.2]{BellFarinaRice}.

\begin{definition}
A just-infinite $F$-algebra $A$ is called \emph{stably just-infinite over $F$} if $E\otimes_F A$ is just-infinite as an $E$-algebra for every field extension $E/F$.
\end{definition}

\begin{corollary}[Scalar stability]\label{corstable-standard} For every infinite group $G$, the $F$-algebra $F[M(G,2)]$ is stably just-infinite over $F$ if and only if $F[G]$ is stably just-infinite over $F$.
\end{corollary}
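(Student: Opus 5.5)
The plan is to reduce the corollary to Theorem~\ref{teochein-main} applied over each extension field $E$. The key is to identify $E\otimes_F F[M(G,2)]$ with $E[M(G,2)]$ and $E\otimes_F F[G]$ with $E[G]$. Since $F[L]$ has $F$-basis $L$ and multiplication is the $F$-bilinear extension of the loop product, $E\otimes_F F[L]$ is the $E$-space with basis $\{1\otimes x : x\in L\}$. Its multiplication is the $E$-bilinear extension of the same product. So the natural map $1\otimes x\mapsto x$ gives an isomorphism of (possibly nonassociative) $E$-algebras $E\otimes_F F[L]\cong E[L]$, for every loop $L$. Nothing beyond bilinearity is needed, so this holds equally for the Moufang loop $M(G,2)$ and for the group $G$.

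With these identifications, the two stable conditions read as follows. $F[M(G,2)]$ is stably just-infinite over $F$ if and only if $E[M(G,2)]$ is just-infinite for every extension $E/F$. Likewise, $F[G]$ is stably just-infinite if and only if $E[G]$ is just-infinite for every such $E$. The case $E=F$ recovers the plain just-infinite requirement built into the definition, so no separate argument is needed for it. Theorem~\ref{teochein-main} holds for every field, in particular for $E$, and $G$ is still infinite. Hence, for each fixed $E$, $E[M(G,2)]$ is just-infinite if and only if $E[G]$ is just-infinite. Quantifying over all $E/F$ gives the equivalence.

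The only step needing care is the tensor identification. I would check it explicitly on basis elements, including that the isomorphism respects products without any associativity: $(1\otimes x)(1\otimes y)=1\otimes xy$. I would also confirm that the definition of stable just-infiniteness makes sense for nonassociative $A$: the tensor product $E\otimes_F A$ is formed with the product $(e\otimes a)(e'\otimes a')=ee'\otimes aa'$, which is well defined for any bilinear multiplication. I expect no genuine obstacle beyond this bookkeeping, since all the substantive work is already contained in Theorem~\ref{teochein-main}.
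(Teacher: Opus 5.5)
Your proposal is correct and follows the paper's proof: you identify $E\otimes_F F[L]\cong E[L]$ for $L=G$ and $L=M(G,2)$, and then apply Theorem~\ref{teochein-main} over each extension field $E$, with $E=F$ covering the just-infiniteness required in the definition. Your check that this identification uses only bilinearity, not associativity, is a detail the paper leaves implicit.
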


\begin{proof}
For every extension $E/F$ there are natural isomorphisms
\[\begin{array}{rcl}
  E\otimes_F F[G]&\cong& E[G]\\
  E\otimes_F F[M(G,2)]&\cong& E[M(G,2)].
\end{array}
\]
Applying Theorem~\ref{teochein-main} over $E$ completes the proof.
\end{proof}

\section{Generalized Chein doubles}

We now consider Chein's generalized construction \cite{Chein1978}, in the formulation recalled and developed by Kinyon, Phillips, and Vojt\v{e}chovsk\'y \cite{KPV}.  Let $G$ be a group, let $*:G\to G$ be an involutory anti-automorphism, and choose
$g_0\in Z(G)$ such that, for all $g\in G$,
\begin{equation}\label{admissible}
  \begin{array}{l}
  g_0^*=g_0,\\
  gg^*\in Z(G).\\
  \end{array}
\end{equation}
The multiplication on $M(G,*,g_0)=G\sqcup Gu$ is given by the rules
\begin{equation}\label{eq:generalized-loop}
\begin{array}{rcl}
  g\cdot h&=&gh,\\
  g\cdot(hu)&=&(hg)u,\\
  (gu)\cdot h&=&(gh^*)u,\\
  (gu)\cdot(hu)&=&g_0h^*g.
\end{array}
\end{equation}
Under \eqref{admissible}, these rules define a Moufang loop \cite{Chein1978,KPV}, and we call the triple $(G,*,g_0)$ \emph{admissible}. The standard Chein loop is recovered by taking $g^*=g^{-1}$ and $g_0=1$.

Extend $*$ linearly to an involutory anti-automorphism of $A=F[G]$ and put
\[
  B=F[M(G,*,g_0)]=A\oplus Au.
\]
The induced multiplication is, for $a,b\in A$,
\begin{equation}\label{generalized-algebra}
\begin{array}{rcl}
  a(bu)&=&(ba)u,\\
  (au)b&=&(ab^*)u,\\
  (au)(bu)&=&g_0b^*a.
\end{array}
\end{equation}

The preceding arguments extend to generalized Chein doubles.

\begin{proposition}[Generalized doubled ideals]
\label{propgeneralized-doubled}
Let $K\normal A$.  Then $K\oplus Ku$ is an ideal of $B$ if and only if
$K^*=K$.  In this case
\[
  \dim_F\bigl(B/(K\oplus Ku)\bigr)=2\dim_F(A/K).
\]
Moreover, if $J\normal B$ and $K=J\cap A$, then $K^*=K$ and
$K\oplus Ku\subseteq J$.
\end{proposition}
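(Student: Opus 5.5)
The plan is to follow the proof of Proposition~\ref{propdoubled-ideals} verbatim, replacing the rules \eqref{eq:chein-algebra} by \eqref{generalized-algebra}. The only new ingredient is the central element $g_0$. Since $g_0$ is an invertible central element of $A$ with $g_0^*=g_0$, multiplication by $g_0$ or by $g_0^{-1}$ preserves every ideal of $A$.

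First, assume $K^*=K$ and check that $K\oplus Ku$ is stable under multiplication by $B=A\oplus Au$ on both sides. For $k\in K$ and $a\in A$, the products are as follows.
\begin{itemize}
\item $ak$ and $ka$ lie in $K$.
\item $a(ku)=(ka)u$ and $(ku)a=(ka^*)u$ lie in $Ku$.
\item $(au)k=(ak^*)u$ lies in $Ku$, because $k^*\in K$.
\item $k(au)=(ak)u$ lies in $Ku$.
\item $(au)(ku)=g_0k^*a$ and $(ku)(au)=g_0a^*k$ lie in $K$, using $K^*=K$ and the fact that $K$ is an ideal.
\end{itemize}
Bilinearity then gives $B(K\oplus Ku)+(K\oplus Ku)B\subseteq K\oplus Ku$. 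For the dimension formula, the decomposition $B=A\oplus Au$ is a direct sum of $F$-spaces, and $Au\cong A$ via $au\mapsto a$. Hence $B/(K\oplus Ku)\cong A/K\oplus (A/K)u$ as vector spaces, which gives $2\dim_F(A/K)$.

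For the converse, suppose $K\oplus Ku\normal B$ and take $k\in K$. Then $u(ku)=(1u)(ku)=g_0k^*\cdot 1=g_0k^*$ lies in $(K\oplus Ku)\cap A=K$. Multiplying by $g_0^{-1}\in G\subseteq A$ gives $k^*\in K$. So $K^*\subseteq K$, and applying the involution $*$ yields equality.

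For the last assertion, let $J\normal B$ and $K=J\cap A$. For $k\in K$ we have $ku=k\cdot(1u)\in J$; this product is legitimate because $J$ is a right ideal and $1u\in B$. This gives $Ku\subseteq J$. Next, $u(ku)=g_0k^*\in J\cap A=K$, and multiplying by $g_0^{-1}$ gives $k^*\in K$, hence $K^*=K$ as before. Therefore $K\oplus Ku\subseteq J$. Note also that $ku=(1\cdot k)u$ agrees with the rule $a(bu)=(ba)u$ with $a=1$, so everything is consistent.

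The only point requiring care, and hence the main obstacle, is the appearance of $g_0$ in the product $u(ku)$. It is handled by the invertibility and centrality of $g_0$ guaranteed by admissibility \eqref{admissible}. Note that $g_0\in Z(G)$ makes $g_0$ central in $A$, so $g_0^{-1}(g_0k^*)=k^*$ without any ordering issues inside the associative algebra $A$.
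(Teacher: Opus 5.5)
Your proof is correct and follows essentially the same route as the paper: you verify closure directly from the rules \eqref{generalized-algebra}, read off the codimension from $B=A\oplus Au$, and extract $K^*\subseteq K$ from $u(ku)=g_0k^*$ by multiplying with the unit $g_0^{-1}$. You are in fact more explicit than the paper, since you list every product and justify $ku=k(1u)\in J$. One small slip: the instance of $a(bu)=(ba)u$ you need there is $a=k$, $b=1$, not $a=1$.
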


\begin{proof}
If $K^*=K$, the rules in \eqref{generalized-algebra} show that $K\oplus Ku$ is an ideal.  Conversely, if it is an ideal, then
\[
  u(ku)=g_0k^*\in K,
\]
for all $k\in K$.  Multiplication by the unit $g_0^{-1}$ gives $K^*\subseteq K$, and equality follows on applying $*$ again.  The codimension formula follows from the direct-sum decomposition.

Finally, if $J\normal B$ and $K=J\cap A$, then $ku\in J$ for $k\in K$, so $g_0k^*=u(ku)\in J\cap A$.  Multiplying by $g_0^{-1}$ shows that $K^*\subseteq K$, and equality follows on applying $*$ again.  Moreover, $Ku\subseteq J$, and therefore $K\oplus Ku\subseteq J$.
\end{proof}

\begin{theorem}[Generalized ideal intersection] \label{teogeneralized-intersection}
Let $(G,*,g_0)$ be admissible.  If $G$ is infinite and nonabelian and $A=F[G]$ is prime, then every nonzero ideal of
$B=F[M(G,*,g_0)]$ meets $A$ nontrivially.
\end{theorem}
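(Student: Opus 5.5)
We follow the proof of Theorem~\ref{teoideal_intersection}, replacing inversion by $*$ and keeping track of $g_0$.

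Assume, for a contradiction, that $0\ne J\normal B$ and $J\cap A=0$. First, $J\cap Au=0$. Indeed, for $a\in A$,
\[
  (au)u=(au)(1u)=g_0a ,
\]
and $g_0$ is a unit of $A$, so if $au\in J$ then $g_0a\in J\cap A=0$, whence $a=0$. Hence the projection $p_A|_J$ is injective. Put $V=p_A(J)$; we write $J=\{a+T(a)u:a\in V\}$ for a unique linear map $T:V\to A$.

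Next, multiply on the left and on the right by $c\in G$ and compare components using \eqref{generalized-algebra}:
\[
  c(a+T(a)u)=ca+(T(a)c)u,\qquad (a+T(a)u)c=ac+(T(a)c^*)u .
\]
These show that $V\normal A$ and that, for all $a\in V$ and $c\in G$,
\[
  T(ca)=T(a)c,\qquad T(ac)=T(a)c^* .
\]
Right multiplication by $u$ gives
\[
  (a+T(a)u)u=g_0T(a)+au .
\]
Hence $g_0T(a)\in V$ and $T(g_0T(a))=a$. Since $g_0$ is central, $T(g_0x)=T(x)g_0$, so $g_0T(T(a))=a$. Thus $T$ is injective on $V$, and since $g_0^{-1}V\subseteq V$ we get $T(V)=V$, up to the unit $g_0$. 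In particular $T\ne0$ and $T(V)\ne 0$.

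Now compute $T(cad)$ in two ways, for $a\in V$ and $c,d\in G$:
\[
  T((ca)d)=T(a)c\,d^*,\qquad T(c(ad))=T(a)d^*c .
\]
Therefore $T(V)\,(cd^*-d^*c)=0$. Since $T(V)=V$, this gives $V(cd^*-d^*c)=0$. As $d$ ranges over $G$, so does $d^*$, because $*$ is a bijection. So $V(ce-ec)=0$ for all $c,e\in G$. Since $G$ is nonabelian, some $ce-ec$ is nonzero; let $C$ be the nonzero ideal it generates.

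The remaining step is to pass from $V(ce-ec)=0$ to $VC=0$. This needs only that $V$ is an ideal: for $x,y\in A$ we have $V\,x(ce-ec)y\subseteq V(ce-ec)y=0$, because $Vx\subseteq V$. Thus $VC=0$ with both $V$ and $C$ nonzero ideals, contradicting the primeness of $A$.

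The main obstacle is the $g_0$ bookkeeping in showing $T(V)=V$. This works because $g_0$ is a central unit with $g_0^*=g_0$, so the relations above only get rescaled by $g_0$. The abelian case, which in Theorem~\ref{teoideal_intersection} needed Connell's criterion, is excluded here by hypothesis.
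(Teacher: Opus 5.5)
Your proof is correct and follows the paper's argument step for step: $J$ is the graph of $T$, you derive $T(ca)=T(a)c$ and $T(ac)=T(a)c^*$, you get $T(V)=V$ from right multiplication by $u$, and the two evaluations of $T(cad)$ make $V$ annihilate the commutators, contradicting primeness. One cosmetic point: $T(V)=V$ holds exactly, not just ``up to $g_0$'', since $T(a)=g_0^{-1}\bigl(g_0T(a)\bigr)\in V$ and $a=T\bigl(g_0T(a)\bigr)$; this inclusion $T(a)\in V$ should also come before you write $T(T(a))$.
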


\begin{proof}
Suppose that $0\ne J\normal B$ and $J\cap A=0$.  Then $J\cap Au=0$, because $(au)u=g_0a$.  As in the proof of Theorem
\ref{teoideal_intersection}, $J$ is the graph
\[
  J=\{a+T(a)u:a\in V\}
\]
of a linear map $T:V\to A$, where $0\ne V\normal A$.  Left and right multiplication by $c\in G$ give
\begin{equation}\label{T-generalized}
\begin{array}{rcl}
  T(ca)&=&T(a)c,\\
  T(ac)&=&T(a)c^*,
  \end{array}
\end{equation}
for any $a\in V$.  Right multiplication by $u$ gives $ g_0T(a)\in V$ and $T(g_0T(a))=a.$
Since $g_0$ is a unit and $V$ is an ideal, the first relation implies $T(V)\subseteq V$, while the second implies that $T:V\to V$ is surjective. Thus $T(V)=V$.

For $a\in V$ and $c,d\in G$, associativity in $A$ and \eqref{T-generalized} yield
\[\begin{array}{rcl}
  T((ca)d)&=&T(a)cd^*\\
  T(c(ad))&=&T(a)d^*c.
\end{array}
\]
Since $T(V)=V$ and $*$ is surjective, $V$ annihilates every commutator of $A$ on the right.  Let $C$ be the ideal of $A$ generated by these commutators.  Since $G$ is nonabelian, $C\ne0$, while the preceding identity and the fact that $V$ is an ideal give $VC=0$.  This contradicts the primeness of $A$, and therefore $J\cap A\ne0$.
\end{proof}

\begin{theorem}[Generalized doubling theorem]
\label{teogeneralized-main}
Let $(G,*,g_0)$ be admissible, with $G$ infinite and nonabelian.  Then, for every field $F$, the algebra $F[M(G,*,g_0)]$ is just-infinite if and only if $F[G]$ is just-infinite.
\end{theorem}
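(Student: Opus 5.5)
The plan follows the proof of Theorem~\ref{teochein-main}, with Proposition~\ref{propgeneralized-doubled} and Theorem~\ref{teogeneralized-intersection} in place of their standard counterparts. In both directions the first task is to show that $G$ is just-infinite, so that Connell's criterion makes $A=F[G]$ prime.

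\emph{($F[G]$ just-infinite $\Rightarrow$ $B$ just-infinite).} Theorem~\ref{teoalgebra-implies-loop} applied to $G$ shows that $G$ is just-infinite. Since $G$ is infinite, any nontrivial finite normal subgroup would have infinite index, so $G$ has none, and Connell's criterion gives that $A$ is prime. Let $0\ne J\normal B$. Because $G$ is infinite and nonabelian, Theorem~\ref{teogeneralized-intersection} gives $K=J\cap A\ne0$. Just-infiniteness of $A$ then makes $A/K$ finite-dimensional. Proposition~\ref{propgeneralized-doubled} gives $K\oplus Ku\subseteq J$ with $\dim_F B/(K\oplus Ku)=2\dim_F(A/K)<\infty$. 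Hence $B/J$ is finite-dimensional, and $B$ is clearly infinite-dimensional.

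\emph{($B$ just-infinite $\Rightarrow$ $F[G]$ just-infinite).} The loop $M(G,*,g_0)$ is just-infinite by Theorem~\ref{teoalgebra-implies-loop}. I then show directly that $G$ is just-infinite, since Theorem~\ref{teoloop-chein} covers only the standard double.

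Take $1\ne N\normal G$, and first suppose $N^*=N$. Consider the ideal $\omega_F(N)A$ of $A$, which is $\omega_F(N)$ computed in $A$. Applying $*$ to an element $n-1$ gives $n^*-1$, so the ideal is $*$-stable. By Proposition~\ref{propgeneralized-doubled}, $\omega_F(N)\oplus\omega_F(N)u$ is then a nonzero ideal of $B$. It has finite codimension, so $\codim_A\omega_F(N)=|G/N|$ is finite.

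For general $N$, note that $N^*$ is again normal, because $*$ is an anti-automorphism. Replace $N$ by $NN^*$, which is $*$-stable and contains $N$; but a larger subgroup does not bound the index of $N$. So instead I use $N\cap N^*$, which is normal and $*$-stable. The obstacle is that $N\cap N^*$ might be trivial.

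In that case, argue at the algebra level instead. Put $K=\omega_F(N)$. The element $H=KK^*$ is nonzero, lies in $K$, and is $*$-stable. The difficulty is that $KK^*\ne0$ already needs primeness. To avoid this, I take $H=K+K^*$ instead. This is $*$-stable and nonzero, hence of finite codimension, and it equals $\omega_F(NN^*)$. That only gives that $NN^*$ has finite index, which is not enough for $N$.

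So the cleaner route I would try is the following. Use $K\cap K^*$, which is a $*$-stable ideal of $A$. It contains $KK^*$, which is nonzero provided $A$ has no nonzero ideals $K,K'$ with $KK'=0$. To obtain primeness first, argue by contradiction with Connell. Suppose $G$ had a nontrivial finite normal subgroup $P$. Then $P'=PP^*$ is also finite, normal and $*$-stable. By the $*$-stable case above, $G/P'$ is finite, so $G$ would be finite, which is a contradiction. Hence $A$ is prime, and $KK^*\ne0$.

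Then for any $0\ne K\normal A$, the ideal $H=KK^*\subseteq K$ is $*$-stable and nonzero. Therefore $H\oplus Hu$ is a nonzero ideal of $B$ by Proposition~\ref{propgeneralized-doubled}, and $\dim_F(A/K)\le\dim_F(A/H)=\tfrac12\dim_F B/(H\oplus Hu)<\infty$. This shows that $A$ is just-infinite. I expect the main obstacle to be securing primeness of $A$ in this direction without an analogue of Theorem~\ref{teoloop-chein}; the $*$-stabilization via $PP^*$ is the step that resolves it.
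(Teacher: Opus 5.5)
Your proof is correct and follows the paper's argument in both directions, including the $*$-stabilization $P\mapsto PP^*$ to get primeness and the use of $H=KK^*$ for the final codimension bound. The only difference is in how you rule out a finite normal $*$-stable subgroup $P'=PP^*$. You use the nonzero doubled ideal $\omega_F(P')\oplus\omega_F(P')u$ of $B$, which has codimension $2|G/P'|$. The paper instead uses the loop map $M(G,*,g_0)\to M(G/P',\overline{*},\overline{g_0})$ together with Theorem~\ref{teoalgebra-implies-loop}. Your ideal is exactly the kernel of the algebra map induced by that loop map, so the two arguments coincide; yours merely avoids checking that the quotient triple is admissible. The exploratory detours through $N\cap N^*$ and $K+K^*$ can simply be deleted.
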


\begin{proof}
As usual, let $A=F[G]$ and $B=F[M(G,*,g_0)]$, and suppose first that $A$ is just-infinite.  Then $G$ is just-infinite by
Theorem~\ref{teoalgebra-implies-loop}, so $A$ is prime by Connell's criterion.  If $0\ne J\normal B$, Theorem
\ref{teogeneralized-intersection} gives $0\ne K=J\cap A$. Proposition~\ref{propgeneralized-doubled} shows that $K^*=K$ and
$K\oplus Ku\subseteq J$.  Since $A/K$ is finite-dimensional, so is $B/J$.

Conversely, suppose that $B$ is just-infinite.  We first prove that $A$ is prime.  If $G$ had a nontrivial finite normal subgroup $N$, then $P=NN^*$ would be a nontrivial finite normal subgroup of $G$ with $P^*=P$.  Indeed, $N^*$ is normal, since for $x\in G$ and $n\in N$,
\[
  xn^*x^{-1}
  =\bigl((x^{-1})^*n((x^{-1})^*)^{-1}\bigr)^*\in N^*.
\]
Moreover, two normal subgroups multiply to a normal subgroup, and $(NN^*)^*=NN^*$.
The involution therefore descends to $G/P$, and the natural map
\[
  M(G,*,g_0)\rightarrow
  M(G/P,\overline{*},\overline{g_0})
\]
has kernel $P$.  Since $G$ is infinite and $P$ is finite, this kernel is a nontrivial normal subloop of infinite index, which contradicts Theorem~\ref{teoalgebra-implies-loop}.  Thus $G$ has no nontrivial finite normal subgroup, and $A$ is prime by Connell's criterion.

Now take $0\ne K\normal A$.  Primeness gives $H=KK^*\ne0$.  Moreover, $H^*=H$ and $H\subseteq K$.  Proposition~\ref{propgeneralized-doubled} shows that $H\oplus Hu$ is a nonzero ideal of $B$.  It has finite
codimension, so $A/H$, and therefore $A/K$, is finite-dimensional.  Hence $A$ is just-infinite.
\end{proof}

\begin{corollary}\label{cor:stable-generalized}
Under the hypotheses of Theorem~\ref{teogeneralized-main}, $F[M(G,*,g_0)]$ is stably just-infinite over $F$ if and only if $F[G]$ is stably just-infinite over $F$.
\end{corollary}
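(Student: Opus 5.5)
The plan is to follow the proof of Corollary~\ref{corstable-standard} verbatim, with Theorem~\ref{teogeneralized-main} taking the place of Theorem~\ref{teochein-main}. Fix a field extension $E/F$. First I will check that the admissibility hypotheses do not depend on the field. The triple $(G,*,g_0)$ is defined purely at the group level. The conditions in \eqref{admissible}, and the facts that $G$ is infinite and nonabelian, make no reference to $F$. Hence Theorem~\ref{teogeneralized-main} applies verbatim over $E$. It gives: $E[M(G,*,g_0)]$ is just-infinite as an $E$-algebra if and only if $E[G]$ is just-infinite as an $E$-algebra.

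Second, I will identify the scalar extensions. For any magma $L$, the $E$-linear map
\[
  E\otimes_F F[L]\to E[L],\qquad \lambda\otimes x\mapsto \lambda x \quad(x\in L),
\]
sends the $E$-basis $\{1\otimes x:x\in L\}$ bijectively onto the basis $L$ of $E[L]$. It is multiplicative on these basis elements, and both products are $E$-bilinear. So it is an isomorphism of $E$-algebras, with no associativity needed. Applying this to $L=G$ and $L=M(G,*,g_0)$ gives
\[
  E\otimes_F F[G]\cong E[G],\qquad E\otimes_F F[M(G,*,g_0)]\cong E[M(G,*,g_0)].
\]

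Finally, I will combine these. Stable just-infiniteness of $F[M(G,*,g_0)]$ means that $E\otimes_F F[M(G,*,g_0)]\cong E[M(G,*,g_0)]$ is just-infinite for every $E/F$. By the first step, this holds for a given $E$ exactly when $E[G]\cong E\otimes_F F[G]$ is just-infinite. The case $E=F$ is included, and it supplies the base requirement that the algebras themselves be just-infinite. So the two stability conditions coincide.

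There is essentially no obstacle here. The only point needing care is confirming that the hypotheses of Theorem~\ref{teogeneralized-main} are field-independent. Its proof invokes Connell's criterion, which holds over every field, so the theorem is genuinely available over an arbitrary extension $E$.
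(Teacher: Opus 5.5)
Your proposal is correct and matches the paper's proof: apply Theorem~\ref{teogeneralized-main} over each extension $E/F$, whose hypotheses are purely group-theoretic, and transport the conclusion through the isomorphisms $E\otimes_F F[G]\cong E[G]$ and $E\otimes_F F[M(G,*,g_0)]\cong E[M(G,*,g_0)]$. Your additional checks, that the basis-level isomorphism needs no associativity and that the case $E=F$ supplies the required base just-infiniteness, only make explicit what the paper leaves implicit.
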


\begin{proof}
Apply Theorem~\ref{teogeneralized-main} after every field extension $E/F$, using the natural scalar-extension isomorphisms.
\end{proof}

\section{Uniform examples and finiteness properties}

Recall that a possibly nonassociative $F$-algebra $C$ is \emph{residually finite-dimensional} if the intersection of its ideals of finite codimension is zero.  It is \emph{locally finite-dimensional} if every finitely generated subalgebra is finite-dimensional.

\begin{proposition}\label{prop:finite-properties}
Let $G$ be a group and $L=M(G,2)$.
\begin{itemize}
  \item[(i)] If $G$ is locally finite, then $L$ is locally finite and $F[L]$ is  locally finite-dimensional for every field $F$.
  \item[(ii)] If $G$ is residually finite, then $L$ is residually finite.  For   every field $F$, the algebra $F[L]$ is residually finite-dimensional.
\end{itemize}
\end{proposition}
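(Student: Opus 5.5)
For (i), I will show that every finitely generated subloop of $L=M(G,2)$ is finite. Take finitely many elements of $L$, written as $g_i$ or $h_ju$ with $g_i,h_j\in G$. Let $S\le G$ be the subgroup generated by all the $g_i$ and $h_j$. Since $G$ is locally finite, $S$ is finite. The multiplication rules \eqref{eq:chein-loop} show that $S\sqcup Su$ is closed under products. In a finite loop the divisions come from the permutation structure; alternatively, $M(G,2)$ is an IP-loop with $(gu)^{-1}=gu$, since $(gu)(gu)=g^{-1}g=1$. Hence $S\sqcup Su$ is a finite subloop, namely $M(S,2)$, and it contains the generated subloop.

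For the algebra, any finitely generated subalgebra of $F[L]$ is generated by finitely many elements. Each of these is a finite linear combination of loop elements. Let $L_0$ be the finite subloop generated by all loop elements in their supports. Then the subalgebra lies in $F[L_0]$, which has dimension $|L_0|<\infty$.

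For (ii), I will use the natural maps $M(G,2)\to M(G/N,2)$ for $N\normal G$ of finite index. These already appear in the proof of Theorem~\ref{teoloop-chein}, and the target is finite of order $2|G/N|$. Given $x\ne y$ in $L$, I split into cases.
\begin{itemize}
\item If $x\in G$ and $y\in Gu$, or the reverse, then any such map separates them, because the $G$- and $Gu$-parts are preserved.
\item If $x=g$ and $y=h$ lie in $G$, or $x=gu$ and $y=hu$, choose a finite-index normal subgroup $N$ with $g^{-1}h\notin N$. This is possible by residual finiteness of $G$. Then the images differ.
\end{itemize}
So $L$ is residually finite.

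For $F[L]$, take a nonzero $z=\sum \lambda_x x$ with finite support $X\subseteq L$. Residual finiteness of $G$ gives finitely many finite-index normal subgroups separating the pairwise quotients $g^{-1}h$ from elements of $X$. Their intersection $N$ is again normal of finite index, and the map $L\to M(G/N,2)$ is injective on $X$. Then $F[q_N](z)\ne0$, so $z\notin\omega_F(N)$. By Proposition~\ref{propquotient}, $\omega_F(N)$ has codimension $2|G/N|<\infty$. Hence the intersection of the finite-codimension ideals is zero.

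The only point needing care is that $N$ is indeed the kernel of a loop homomorphism. This means checking that the induced map respects all four rules in \eqref{eq:chein-loop}, which is immediate since $N$ is normal, together with the careful choice of $N$ in (ii) so that it separates all of $X$ simultaneously.
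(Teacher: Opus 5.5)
Your proposal is correct and follows essentially the same route as the paper: in (i) you place everything in the finite subloop $M(S,2)$ and its loop algebra, and in (ii) you separate points by the maps $M(G,2)\to M(G/N,2)$ and then make one such map injective on the support of $z$. The only cosmetic difference is that you intersect finitely many finite-index normal subgroups and invoke Proposition~\ref{propquotient}, whereas the paper takes a diagonal product of separating maps. These amount to the same thing, since the kernel of that diagonal map is exactly your intersection $N$.
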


\begin{proof}
For (i), a finite subset of $L$ involves only finitely many elements of $G$. They generate a finite subgroup $H$ of $G$, and the given subset is contained in the finite subloop $M(H,2)$.  Similarly, finitely many elements
of $F[L]$ have finite total support, so the algebra they generate is contained in the finite-dimensional algebra $F[M(H,2)]$.

For (ii), two distinct elements of $L$ either lie in different components $G$ and $Gu$, or their $G$-coordinates can be separated in a finite quotient of $G$.  The corresponding map onto a finite Chein loop separates them.
Thus $L$ is residually finite.  Now let $0\ne\alpha\in F[L]$.  Its support is finite.  Taking a diagonal product of finitely many separating maps gives a homomorphism from $L$ to a finite loop that is injective on this support.  The
induced homomorphism of loop algebras has finite-dimensional image and does not annihilate $\alpha$.  Therefore $F[L]$ is residually finite-dimensional.
\end{proof}

Belyaev, Grigorchuk, and Shumyatsky constructed a single infinite, locally finite, residually finite group $G$ such that $F[G]$ is just-infinite for every field $F$ \cite[Theorem~2.4]{BGS}.  Combining their construction with
the preceding results gives the following uniform nonassociative example.

\begin{theorem}\label{teouniform-example}
There exists an infinite, locally finite, residually finite, nonassociative Moufang loop $L$ with the following properties.
\begin{itemize}
  \item[(i)] The loop $L$ is just-infinite.
  \item[(ii)] For every field $F$, the loop algebra $F[L]$ is nonassociative,
  just-infinite, locally finite-dimensional, and residually
  finite-dimensional.
  \item[(iii)] For every field $F$, the algebra $F[L]$ is stably just-infinite over
  $F$.
\end{itemize}
\end{theorem}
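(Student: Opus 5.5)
Take $G$ to be the Belyaev--Grigorchuk--Shumyatsky group \cite[Theorem~2.4]{BGS}: infinite, locally finite, residually finite, and with $F[G]$ just-infinite for every field $F$. Set $L=M(G,2)$. The plan is to verify each listed property by invoking the earlier results, after first checking that $G$ is nonabelian.

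\textbf{$G$ is nonabelian.} The argument is by contradiction. Suppose $G$ were abelian. By Theorem~\ref{teoalgebra-implies-loop}, applied with $F$ arbitrary, $G$ would be just-infinite. An infinite abelian just-infinite group has every nontrivial subgroup normal and of finite index. Such a group is torsion-free, since a nontrivial finite subgroup would have infinite index. On the other hand, $G$ is locally finite and infinite, hence a nontrivial torsion group. This contradiction shows that $G$ is nonabelian.

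A second route would avoid the group theory entirely: if $G$ were abelian, then $F[G]$ would be a commutative just-infinite algebra for every field, and I would try to rule this out via the rational/complex case. The first route is cleaner, and this nonabelian step is the only place requiring genuine thought.

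Since $G$ is nonabelian, Chein's construction makes $L$ a nonassociative Moufang loop. Moreover $L$ is infinite, because $G$ is.

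\textbf{The remaining properties.}
\begin{itemize}
\item \emph{Local finiteness and residual finiteness of $L$.} These come from Proposition~\ref{prop:finite-properties}, parts (i) and (ii).
\item \emph{Property (i).} Since $G$ is just-infinite (again by Theorem~\ref{teoalgebra-implies-loop}), Theorem~\ref{teoloop-chein} shows that $L$ is just-infinite. Corollary~\ref{corloop-examples} gives the same conclusion.
\item \emph{Just-infiniteness of $F[L]$ in (ii).} For each field $F$, $F[G]$ is just-infinite, so Theorem~\ref{teochein-main} shows that $F[L]$ is just-infinite.
\item \emph{Nonassociativity of $F[L]$.} Loop elements form a basis of $F[L]$. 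Hence a triple $x,y,z\in L$ with $(xy)z\neq x(yz)$ in $L$ gives distinct basis vectors, so $F[L]$ is not associative.
\item \emph{Local and residual finite-dimensionality.} These again come from Proposition~\ref{prop:finite-properties}.
\item \emph{Property (iii).} The BGS statement holds for every field. So for any extension $E/F$, $E[G]\cong E\otimes_F F[G]$ is just-infinite, which means $F[G]$ is stably just-infinite over $F$. Corollary~\ref{corstable-standard} then transfers this to $F[L]$.
\end{itemize}
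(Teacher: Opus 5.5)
Your proposal is correct and follows essentially the same route as the paper: take the BGS group $G$, set $L=M(G,2)$, get nonabelianness from the fact that an infinite locally finite abelian group cannot be just-infinite, and then invoke Theorems~\ref{teoloop-chein} and~\ref{teochein-main} together with Proposition~\ref{prop:finite-properties}. The only differences are cosmetic: you verify nonassociativity of $F[L]$ explicitly via the loop basis, and you obtain (iii) through Corollary~\ref{corstable-standard} rather than directly from $E\otimes_F F[L]\cong E[L]$ and the every-field statement.
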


\begin{proof}
Take the group $G$ of \cite[Theorem~2.4]{BGS} and put $L=M(G,2)$.  The group $G$ is necessarily nonabelian.  Indeed, an infinite locally finite abelian group has a nontrivial finite subgroup of infinite index and so cannot be
just-infinite, whereas Theorem~\ref{teoalgebra-implies-loop} shows that this $G$ is just-infinite.  Hence $L$ is a nonassociative Moufang loop.

Theorem~\ref{teochein-main} implies that $F[L]$ is just-infinite for every field $F$, and Theorem~\ref{teoloop-chein} gives (i).
Proposition~\ref{prop:finite-properties} supplies the stated local and residual properties.  Finally, if $E/F$ is any field extension, then
\[
  E\otimes_F F[L]\cong E[L],
\]
and $E[L]$ is just-infinite by the same every-field property, which proves
(iii).
\end{proof}

\section{The obstruction from RA loops}

Recall that an RA loop is a loop whose loop ring over a suitable commutative coefficient ring is alternative but nonassociative.  A basic structure theorem for nonassociative RA loops states that their commutator--associator
subloop is
\[
  L'=\{1,s\}\subseteq Z(L)
\]
for an element $s$ of order $2$ \cite[Theorem~IV.3.1]{GoodaireJespersMilies}; see also \cite[Theorem~2.1]{CornelissenMilies}.

\begin{corollary}\label{corra-obstruction}
No infinite nonassociative RA loop is just-infinite.  Consequently, if $L$ is such a loop, then $F[L]$ is not just-infinite over any field $F$. In particular, if $\operatorname{char}F\ne2$ and $F[L]$ is alternative but
nonassociative for an infinite loop $L$, then $F[L]$ is not just-infinite.
\end{corollary}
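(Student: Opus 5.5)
The plan is to show directly that an infinite nonassociative RA loop $L$ has a nontrivial normal subloop of infinite index. The other assertions then follow: the second from Theorem~\ref{teoalgebra-implies-loop}, the third from the known characterization of RA loops.

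The natural candidate is $N=L'=\{1,s\}$. By the structure theorem cited above, $s$ has order $2$ and $N\subseteq Z(L)$. A central subloop of a loop is normal; alternatively, $L'$ is by definition the smallest normal subloop with associative and commutative quotient, so it is normal in any case. Since $N$ has two elements, $N\neq\{1\}$. Since $L$ is infinite and $N$ is finite, the cosets of $N$ partition $L$ into blocks of size $2$. Hence $|L/N|=|L|/2$ is infinite. Thus $N$ is a nontrivial normal subloop of infinite index, and $L$ is not just-infinite. Equivalently, $L/N$ is an infinite abelian group with $N$ a nontrivial normal subloop of infinite index.

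For the second sentence, suppose $F[L]$ were just-infinite for some field $F$. By Theorem~\ref{teoalgebra-implies-loop}, $L$ would then be just-infinite, contradicting the first step. This holds over any $F$ and uses no alternativity.

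For the last sentence, I would invoke the standard theorem (Chein--Goodaire; see \cite{GoodaireJespersMilies}) that for $\operatorname{char}F\ne 2$, $F[L]$ is alternative exactly when $L$ is a Moufang loop with the LC property. So if $F[L]$ is alternative but nonassociative, then $L$ is nonassociative and $L$ is an RA loop. The definition of RA loop only asks for some suitable commutative coefficient ring, and $F$ serves as that ring. Since $L$ is infinite by hypothesis, the first two parts apply.

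The only delicate point is this identification: the definition of RA loop is phrased via loop rings over some commutative ring, and the cited characterization is what lets one pass from alternativity of $F[L]$ to $L$ being RA. In characteristic $2$ that characterization differs, which explains the restriction $\operatorname{char}F\ne2$.
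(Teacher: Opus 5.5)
Your proof is correct and is essentially the paper's argument: $L'=\{1,s\}$ is a nontrivial finite normal subloop of the infinite loop $L$, hence of infinite index, and the remaining assertions follow from Theorem~\ref{teoalgebra-implies-loop} together with the definition of an RA loop. The appeal to the Chein--Goodaire characterization, and your remark about characteristic $2$, are not needed: as you observe yourself, $F$ serves as the coefficient ring in the definition, so if $F[L]$ is alternative but not associative then $L$ is already an RA loop.
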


\begin{proof}
The nontrivial normal subloop $L'=\{1,s\}$ is finite and hence has infinite index in $L$, therefore $L$ is not just-infinite.  The remaining assertions follow from Theorem~\ref{teoalgebra-implies-loop} and the definition of an RA loop.
\end{proof}

\section{Further questions}

The preceding results reduce the standard Chein case to the corresponding group-algebra problem. Nevertheless several natural directions remain open, and we list three of them below.

\begin{question}
For which classes of inverse-property loops does just-infiniteness of $L$ force every nonzero $L$-faithful ideal of $F[L]$ to have finite codimension?
\end{question}

\begin{question}
Does a loop-level analogue of Theorem~\ref{teogeneralized-main} hold for generalized Chein doubles?  More precisely, which admissible triples satisfy
\[
  M(G,*,g_0)\text{ just-infinite}
  \Leftrightarrow
  G\text{ just-infinite}?
\]
\end{question}

\begin{question}
Is there a finitely generated nonassociative Moufang loop $L$ and a
field $F$ such that $F[L]$ is just-infinite? Can such an example be
chosen so that $F[L]$ is stably just-infinite over $F$? More strongly,
can one choose a single loop $L$ for which $F[L]$ is just-infinite for
every field $F$?
\end{question}

\end{document}